\documentclass{amsart}%
\usepackage{amssymb}
\usepackage{cite}
\usepackage{amsfonts}
\usepackage{amsmath}
\usepackage{graphicx}%
\setcounter{MaxMatrixCols}{30}
\providecommand{\U}[1]{\protect\rule{.1in}{.1in}}
\newtheorem{theorem}{Theorem}
\theoremstyle{plain}

\newtheorem{definition}{Definition}
\newtheorem{example}{Example}

\newtheorem{lemma}{Lemma}

\numberwithin{equation}{section}
\begin{document}
\title[Distance Distribution Functions and Probabilistic Metric Spaces on C*-algebras]{Distance Distribution Functions and Probabilistic Metric Spaces on C*-algebras}
\author{Rasoul Abazari}
\email{rasoolabazari@gmail.com, rasoulabazari@iau.ac.ir}
\address{Department of Mathematics, Ardabil Branch, Islamic Azad University, Ardabil, Iran.}
\subjclass[2020]{54E70, 47H10, 46L05}
\keywords{Distribution functions, Probabilistic metric space, C*-algebra valued metric space }

\begin{abstract}
In this paper, by using the concept of positive elements of $C^*$-algebras instead of the real numbers $\mathbb{R}$, a generalization of distribution functions, with a particular focus on distance distribution functions has been introduced as a novel approach in statistical mathematics. Also, by applying this concept,  probabilistic metric space on $C^*$-algebras  is defined and some properties such as topology and fixed point theorems have been investigated. Finally, some examples and applications to integral equations are given.
\end{abstract}
\maketitle

\section{Introduction}
In probability theory, distribution functions play a fundamental role, therefore, their investigation and generalization are of significant importance. A {\em distribution function} is a nondecreasing and left continuouse function $F$ defined on $\mathbb{R}$, the set of real numbers, with $F(-\infty)=0$ and $F(\infty)=1$. If $F(0)=0$ then it is called {\em distance distribution function}.

In this paper, we present a generalization of the classical definition of a distribution function, with a particular focus on the distance distribution function. The motivation behind this generalization is to broaden the applicability of distribution functions in various mathematical and applied contexts. We begin by revisiting the foundational definitions and theorems, followed by the introduction of the generalized forms.

Classically, distribution functions are defined on the real numbers. By utilizing positive elements of a C*-algebra, we can redefine and generalize distribution functions within the framework of arbitrary C*-algebras, thereby enabling a broader range of results. 	Furthermore, by applying this generalized concept, a probabilistic metric space over C*-algebras is defined, and various properties such as topological structure and fixed point theorems are investigated. Finally, illustrative examples and applications to integral equations are provided.
\section{Main results}
Let $\mathcal{A}$ be an unital C*-algebra and $\mathcal{A}_h=\{a \in \mathcal{A} : a=a^*\}$. We call an element $a\in \mathcal{A}$ a positive element, denote it by $\theta \preceq a$, if $a\in \mathcal{A}_h$ and $\sigma(a)\subseteq [0,\infty)$, where $\theta$ is a zero element in $\mathcal{A}$ and $\sigma(a)$ is the spectrum of $a$. There is a natural partial ordering on $\mathcal{A}_h$ given by $a\preceq b$ if and only if $\theta\preceq b-a$. Let $\mathcal{A}_+$ denotes the set of all positive elements of $\mathcal{A}$ and $|a|=(a^*a)^{\frac{1}{2}}$. For a more detailed discussion on C*-algebras and their positive elements, see  \cite{Murphy,sakai2012c}.
A C*-algebra-valued metric space, as a generalization of metric spaces, was introduced by Ma et al., \cite{Z.H.Ma} by replacing the set of real numbers with the set of positive elements of a C*-algebra. 

\begin{definition}
	\cite{Z.H.Ma} Let $X$ be a nonempty set. Suppose the mapping $d:X\times X\longrightarrow \mathcal{A}_+$ satisfies;
	\begin{enumerate}
		\item $\theta\preceq d(x,y)$ for all $x,y\in X$ and $d(x,y)=\theta \Leftrightarrow x=y$,
		\item $d(x,y)=d(y,x)$ for all $ x,y\in X$,
		\item $d(x,y)\preceq d(x,z)+d(z,y)$ for all $x,y,z\in X$.
	\end{enumerate}   		
	Then $d$ is called a C*-algebra-valued metric on $X$ and $(X,\mathcal{A},d)$ is called a C*-algebra-valued metric space.
\end{definition}   
\begin{example}\label{Ex1}
	Let $X=\mathbb{R}^2$ and $\mathcal{A}=\mathcal{M}_2(\mathbb{R})$. Define the function $d: X\times X \to \mathcal{A}_+$ as follow: 
	$$d\left(a, b\right)= 
	\left(
	\begin{array}{cc}
		|x_1-x_2| & 0  \\
		0 & |y_1-y_2|  \\
	\end{array}
	\right)= diag\left(|x_1-x_2|, |y_1-y_2| \right) 
	$$
	for all $a=(x_1,y_1), b=(x_2,y_2)$ in $X$.\\
	Then, $d$ is a C*-algebra-valued metric on $X$. For this,
	clearly, $d(a,b)=\theta$ if and only if $a=b$, and $d(a,b)=d(b,a)$. To show the triangle property, let $c=(x_3, y_3) \in X$, then,
	\begin{align*}
		d(a,b)&=\left(
		\begin{array}{cc}
			|x_1-x_2| & 0  \\
			0 & |y_1-y_2|  \\
		\end{array}
		\right)\\
		&=\left(
		\begin{array}{cc}
			|x_1-x_3+x_3-x_2| & 0  \\
			0 & |y_1-y_3+y_3-y_2|  \\
		\end{array}
		\right)\\
		&\preceq \left(
		\begin{array}{cc}
			|x_1-x_3|+|x_3-x_2| & 0  \\
			0 & |y_1-y_3|+|y_3-y_2|  \\
		\end{array}
		\right)\\
		&=\left(
		\begin{array}{cc}
			|x_1-x_3| & 0  \\
			0 & |y_1-y_3|  \\
		\end{array}
		\right)+\left(
		\begin{array}{cc}
			|x_3-x_2| & 0  \\
			0 & |y_3-y_2|  \\
		\end{array}
		\right)\\
		&=d(a,c)+d(c,b).
	\end{align*}
	Therefore $d$ is a metric on $X$.
\end{example}

\begin{definition}
	\cite{Sh.Skalar} A {\em distance distribution function} (briefly a {\em d.d.f}) is a non-decreasing function $F$ defined on $\mathbb{R}_+$ that satisfies $F(0)=0$ and $F(\infty)=1$. and is left continuous on $(0,\infty)$. 
\end{definition}
Now, as a generalization, by replacing $\mathcal{A}_+$, the set of positive elements of a C*-algebra instead of $\mathbb{R}_+$, the definition of distance distribution functions on C*-algebra $\mathcal{A}$ is introduced as follow:
\begin{definition}\label{ddf}
	A {\em distance distribution function} (briefly, {\em d.d.f}) is a non-decreasing function $F$ defined on $\mathcal{A}_+$ that satisfies;
	$$F(\theta)=0 \quad , \ \lim_{n\to \infty}F(a_n)=1,$$
	for all strictly increasing sequence $\{a_n\}$ in $\mathcal{A}_+$
	and is left continuous on $\mathcal{A}_+$. 
\end{definition}
The set of all d.d.f 's on $\mathcal{A}$ will be denoted by $\Delta^+_{\mathcal{A}}$.

This study applies the concept of t-norms referenced in \cite{klement2013triangular, Schweizer}.
\begin{definition}\label{DT}
	A mapping  $T:[0,1]\times [0,1] \to [0,1]$ is called a {\em continuous t-norm}, if $T$ satisfies the following conditions:
	\begin{enumerate}
		\item $T$ is commutative and associative, i.e., $T(a,b)=T(b,a)$  and $T(a,T(b,c))=T(T(a,b),c)$, for all $a,b,c\in [0,1]$;
		\item $T$ is continuous;
		\item $T(a,1)=a$ for all $a\in [0,1]$;
		\item $T(a,b)\leq T(c,d)$ whenever $a\leq c$ and $b\leq d$ for all $a,b,c\in [0,1]$.
	\end{enumerate}
\end{definition}
\begin{definition}
	\cite{Schweizer} A Menger probabilistic metric space ($PM$-space) is a triple $(X,F,T)$, where $X$ is non-empty set, $T$ is a continuous $t$-norm and $F$ is a mapping from $X\times X\to \Delta^+$ satisfying the following conditions:\\
	($F_{(x,y)}$ denotes the value of $F$ at the pair $(x,y)$ )
	\begin{enumerate}
		\item $F_{(x,y)}(t)=1$ for all $x,y\in X$ and $t>0$ if and only if $x=y$;
		\item $F_{(x,y)}(t)=F_{(y,x)}(t)$;
		\item $F_{(x,y)}(t+s)\geq T\left(F_{(x,z)}(t), F_{(z,y)}(s)\right)$ for all $x,y,z\in X$ and $t,s\geq0$.
	\end{enumerate}
\end{definition}
Further details on probabilistic metric spaces can be found in \cite{huang2023fixed,NARUKAWA2023108528}.

Now, we introduce the definition of Menger probabilistic metric space on a C*-algebra. 
\begin{definition}\label{PM-def}
	A Menger probabilistic metric space on a C*-algebra $\mathcal{A}$ ($PM^*$-space), is a quadruple $(X,\mathcal{A},\mathcal{F},T)$, where $X$ is non-empty set, $T$ is a continuous $t$-norm and $\mathcal{F}:X\times X\to \Delta^+_{\mathcal{A}}$ is a mapping that satisfies the following conditions:\\
	($\mathcal{F}_{p,q}$ denotes the value of $\mathcal{F}$ at the pair $(p,q)$ )
	\begin{enumerate}
		\item $\mathcal{F}_{p,q}(t)=1$ for all $p,q\in X$ and $t\succ0$ if and only if $p=q$;
		\item $\mathcal{F}_{p,q}(t)=\mathcal{F}_{q,p}(t)$;
		\item $\mathcal{F}_{p,q}(t+s)\geq T\left(\mathcal{F}_{p,r}(t), \mathcal{F}_{r,q}(s)\right)$ for all $p,q,r\in X$, $s,t \in \mathcal{A}_+$.
	\end{enumerate}
\end{definition}
Below, we will provide a few examples.
\begin{example}\label{Ex2}
	Let $(X, \mathcal{A}, d)$ be a C*-algebra-valued metric space as in Example \ref{Ex1} and $\mathcal{H}_0 , \mathcal{D}: \mathcal{A}_+ \to [0, 1]$, be functions as follow;\\
	\[
	\begin{aligned}
		\mathcal{H}_0(C) &= 
		\begin{cases}
			0 & ,\  C=\theta \\
			1 & ,\  C \succ \theta 
		\end{cases}
		& \ , \  &
		\mathcal{D}(C) &= 
		\begin{cases}
			0 & ,\  C=\theta \\
			1-e^{-tr(C)} & ,\  C\succ\theta 
		\end{cases}
	\end{aligned}
	\]
	that, $tr(C)$ is {\em trace} of matrix $C$. Obviously, $\mathcal{H}_0$ and $\mathcal{D}$ belong to $\Delta^+_{\mathcal{A}}$.\\ Define the function $\mathcal{F}: X\times X \to \Delta^+_{\mathcal{A}} $ as follow;
	\[
	\mathcal{F}_{a,b}(C) = 
	\begin{cases}
		\mathcal{H}_0(C) & ,\  a=b \\
		\mathcal{D}(\frac{1}{tr\left(d(a,b)\right)}C) & ,\ a\neq b
	\end{cases},
	\]
	
	for all $C\in \mathcal{A}_+$.\\
	We show that $\mathcal{F}_{a,b}$ is a distance distribution function on C*-algebra $\mathcal{A}$. For this, clearly $\mathcal{F}_{a,b}$ satisfies properties (i) and (ii) in the definition \ref{PM-def}. to prove property (iii), let $A$ and $B$ be in $\mathcal{A}_+$ and $T=T_{\min}$ as $t$-norm. We have to show that;
	$$\mathcal{D}\left( \frac{1}{tr\left(d(a,b)\right)}(A+B)\right) \geq \min \left\{\mathcal{D}\left( \frac{1}{tr\left(d(a,c)\right)}A\right) , \mathcal{D}\left( \frac{1}{tr\left(d(c,b)\right)}B \right)\right\},  $$
	for $a,b,c\in X$.\\
	Since $d(a,b)\preceq d(a,c)+d(c,b)$, so
	$$tr\left( d(a,b)\right) \leq tr\left( d(a,c)\right) +tr\left( d(c,b)\right) $$
	$$\frac{1}{tr\left(d(a,b)\right)}(A+B)\succeq \frac{1}{tr\left( d(a,c)\right) +tr\left( d(c,b)\right)}(A+B)$$
	On the other hands;
	\begin{align*}
		\max\left\{\frac{1}{tr\left(d(a,c)\right)}A, \frac{1}{tr\left(d(c,b)\right)}B\right\} &\succeq \frac{1}{tr\left( d(a,c)\right) +tr\left( d(c,b)\right)}(A+B)\\
		&\succeq \min \left\{\frac{1}{tr\left(d(a,c)\right)}A, \frac{1}{tr\left(d(c,b)\right)}B\right\}
	\end{align*}
	So, 
	$$\frac{1}{tr\left(d(a,b)\right)}(A+B)\succeq \min \left\{\frac{1}{tr\left(d(a,c)\right)}A, \frac{1}{tr\left(d(c,b)\right)}B\right\}.$$
	Since $\mathcal{D}$ is non-decreasing, hence;
	$$\mathcal{D}\left( \frac{1}{tr\left(d(a,b)\right)}(A+B)\right) \geq \min \left\{\mathcal{D}\left( \frac{1}{tr\left(d(a,c)\right)}A\right) , \mathcal{D}\left( \frac{1}{tr\left(d(c,b)\right)}B \right)\right\}, $$
	and therefore;
	$$\mathcal{F}_{a,b}(A+B)\geq T_{\min}\left(\mathcal{F}_{a,c}(A),\mathcal{F}_{c,b}(B) \right).$$
	So, $(X, \mathcal{A}, \mathcal{F}, T_{\min})$ is a probabilistic metric space.
\end{example}

\begin{example}\label{Ex3}
	Let $X, \mathcal{A}, T$ and $d$ be as in previous example. Define $\mathcal{F}:X\times X \to \Delta^+_{\mathcal{A}}$ as follow;
	$$\mathcal{F}_{q,q}(C)=\frac{tr(C)}{tr\left( C+d(p,q)\right) },$$
	for all $C\in \mathcal{A}_+$.
	Clearly, $\mathcal{F}$ satisfies properties (i) and (ii) in the definition \ref{PM-def}. To show the property (iii), Let $A, B \in \mathcal{A}$ and $p, q, r \in X$. We have;
	\begin{align*}
		\mathcal{F}_{p,q}(A+B)&=\frac{tr(A+B)}{tr\left( A+B+d(p,q)\right)}\\
		&\geq \frac{tr(A+B)}{tr\left( A+B+d(p,r)+d(r,q)\right)}\\
		&\geq \min\left\{\frac{tr(A)}{tr\left( A+d(p,r)\right)}, \frac{tr(B)}{tr\left( B+d(r,q)\right)}\right\}\\
		&= \min\left\{\mathcal{F}_{p,r}(A), \mathcal{F}_{r,q}(B)\right\}.
	\end{align*}
	Therefore, $(X, \mathcal{A}, \mathcal{F}, T_{\min})$ is a probabilistic metric space.
\end{example}
\section{Topology and fixed point theorems}
In this section, a topology for probabilistic metric spaces on a C-algebra is introduced, following the framework presented in \cite{Sh.Skalar}, and corresponding fixed point theorems are investigated.
\begin{definition}
	Let $(X,\mathcal{A},\mathcal{F},T)$ be a $PM^*$-space. For $p\in X$ , $t\succeq \theta$ and $\lambda\in (0,1)$, the {\em strong $(t,\lambda)$-neighborhood} of $p$ is defined by the set 
	$$\mathcal{N}_p(t,\lambda)=\{q \in X : \mathcal{F}_{p,q}(t)>1-\lambda\}.$$
\end{definition}
$\mathcal{N}_p(t,\lambda)$ is the set of all points $q$ in $X$ for which the probability of the distance from $p$ to $q$ being less than $t$ is greater than $1-\lambda$. Observe that this neighborhood of a point in an PM*-space depends on two parameters.

\begin{lemma}
	If $t_1\preceq t_2$ and $\lambda_1 \leq \lambda_2$, then $\mathcal{N}_p(t_1,\lambda_1)\subseteq \mathcal{N}_p(t_2,\lambda_2)$.
\end{lemma}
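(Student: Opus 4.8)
The plan is to argue by direct containment: I would fix an arbitrary point $q\in\mathcal{N}_p(t_1,\lambda_1)$ and show that the defining inequality for membership in $\mathcal{N}_p(t_2,\lambda_2)$ is automatically satisfied. Unwinding the definition, $q\in\mathcal{N}_p(t_1,\lambda_1)$ means precisely that $\mathcal{F}_{p,q}(t_1)>1-\lambda_1$, and the target is to verify $\mathcal{F}_{p,q}(t_2)>1-\lambda_2$. The whole statement should then fall out of two independent monotonicity facts sewn together.

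The first ingredient is the monotonicity of the distribution function in its argument. By Definition \ref{ddf}, the map $\mathcal{F}_{p,q}\in\Delta^+_{\mathcal{A}}$ is non-decreasing on $\mathcal{A}_+$ with respect to the partial order $\preceq$. The hypothesis $t_1\preceq t_2$ is exactly what licenses applying this, yielding $\mathcal{F}_{p,q}(t_1)\le\mathcal{F}_{p,q}(t_2)$. I would flag this as the one place where the precise order-hypothesis on the $t_i$ is essential: without $t_1\preceq t_2$ there is no comparison between the two values, since $\mathcal{A}_+$ is only partially ordered. The second ingredient is purely real: from $\lambda_1\le\lambda_2$ in $[0,1)$ we get $1-\lambda_1\ge 1-\lambda_2$.

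Chaining these gives the clean string
\[
\mathcal{F}_{p,q}(t_2)\ \ge\ \mathcal{F}_{p,q}(t_1)\ >\ 1-\lambda_1\ \ge\ 1-\lambda_2,
\]
so $\mathcal{F}_{p,q}(t_2)>1-\lambda_2$, i.e.\ $q\in\mathcal{N}_p(t_2,\lambda_2)$. Since $q$ was arbitrary, the inclusion $\mathcal{N}_p(t_1,\lambda_1)\subseteq\mathcal{N}_p(t_2,\lambda_2)$ follows. I do not expect any genuine obstacle here: the result is essentially a restatement of the monotonicity built into the d.d.f.\ together with the elementary order-reversal under $\lambda\mapsto 1-\lambda$, and the only point deserving a word of care is that the $t$-direction uses the C*-algebra order $\preceq$ while the $\lambda$-direction uses the usual order on $\mathbb{R}$.
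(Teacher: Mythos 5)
Your proposal is correct and follows exactly the paper's own argument: fix $q\in\mathcal{N}_p(t_1,\lambda_1)$, use the non-decreasing property of $\mathcal{F}_{p,q}$ together with $t_1\preceq t_2$ and the reversal $1-\lambda_1\geq 1-\lambda_2$, and chain the inequalities to conclude $\mathcal{F}_{p,q}(t_2)>1-\lambda_2$. There is no substantive difference between your proof and the one in the paper.
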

\begin{proof}
	Let $q\in \mathcal{N}_p(t_1,\lambda_1)$ so $\mathcal{F}_{p,q}(t_1)>1-\lambda_1$, since $\mathcal{F}_{p,q}$ is non-decreasing and $\lambda_1 \leq \lambda_2$, therefore,
	$$\mathcal{F}_{p,q}(t_2)\geq \mathcal{F}_{p,q}(t_1)>1-\lambda_1\geq 1-\lambda_2,$$
	and hence,  $q\in \mathcal{N}_p(t_2,\lambda_2)$.
\end{proof}

\begin{theorem}
	Let $(X,\mathcal{A},\mathcal{F},T)$ be a Menger PM*-space with continuous $t$-norm $T$, if $\mathcal{A}$ be totally ordered C*-algebra, then $(X,\mathcal{A},\mathcal{F},T)$ is a Hausdorff space in the topology induced by the family of $(t,\lambda)$-neighborhoods $\{\mathcal{N}_p\}$.
\end{theorem}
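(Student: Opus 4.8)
The plan is to establish the Hausdorff property directly, by separating two arbitrary distinct points $p,q\in X$ with disjoint basic neighborhoods, carrying out the classical Menger-space argument over $\mathcal{A}_+$. Before the separation itself I would record that the preceding lemma makes $\{\mathcal{N}_p(t,\lambda)\}$ a basis for the induced topology: given $\mathcal{N}_p(t_1,\lambda_1)$ and $\mathcal{N}_p(t_2,\lambda_2)$, the choice $\lambda_3=\min\{\lambda_1,\lambda_2\}$ is unproblematic since the $\lambda$'s are real numbers, whereas selecting a common radius $t_3\succ\theta$ with $t_3\preceq t_1$ and $t_3\preceq t_2$ is exactly where total orderedness of $\mathcal{A}$ enters: any two positive radii are comparable, so $t_3=\min_{\preceq}\{t_1,t_2\}\succ\theta$ exists, and the lemma gives $\mathcal{N}_p(t_3,\lambda_3)\subseteq \mathcal{N}_p(t_1,\lambda_1)\cap\mathcal{N}_p(t_2,\lambda_2)$.

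For the separation, fix $p\neq q$. By condition (i) of Definition \ref{PM-def} it is not the case that $\mathcal{F}_{p,q}(t)=1$ for every $t\succ\theta$, so there is $t_0\succ\theta$ with $c:=\mathcal{F}_{p,q}(t_0)<1$ (the value lies in $[0,1]$ since $\mathcal{F}_{p,q}\in\Delta^+_{\mathcal{A}}$ is non-decreasing with limit $1$). Next I would exploit the continuity of the $t$-norm: since $T(1,1)=1$ by property (3) of Definition \ref{DT} and $T$ is continuous, the quantity $T(1-\lambda,1-\lambda)$ tends to $1$ as $\lambda\to 0^+$, so I can fix $\lambda\in(0,1)$ with $T(1-\lambda,1-\lambda)>c$. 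The two candidate open sets are then $\mathcal{N}_p(\tfrac12 t_0,\lambda)$ and $\mathcal{N}_q(\tfrac12 t_0,\lambda)$; each contains its centre because $\mathcal{F}_{p,p}(\tfrac12 t_0)=1>1-\lambda$.

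The core step is to show these two neighborhoods are disjoint, which I would argue by contradiction. Suppose $w$ lies in both; then $\mathcal{F}_{p,w}(\tfrac12 t_0)>1-\lambda$ and, using symmetry (ii), $\mathcal{F}_{w,q}(\tfrac12 t_0)>1-\lambda$. Applying the probabilistic triangle inequality (iii) with $s=t=\tfrac12 t_0$ and then the monotonicity of $T$ (property (4)), I obtain
\[
c=\mathcal{F}_{p,q}(t_0)=\mathcal{F}_{p,q}\big(\tfrac12 t_0+\tfrac12 t_0\big)\geq T\big(\mathcal{F}_{p,w}(\tfrac12 t_0),\mathcal{F}_{w,q}(\tfrac12 t_0)\big)\geq T(1-\lambda,1-\lambda)>c,
\]
a contradiction. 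Hence $\mathcal{N}_p(\tfrac12 t_0,\lambda)\cap\mathcal{N}_q(\tfrac12 t_0,\lambda)=\emptyset$, and the space is Hausdorff.

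I expect the main obstacle to be conceptual rather than computational, namely isolating precisely where the total-order hypothesis is genuinely indispensable. The separation inequality above in fact goes through in any partially ordered $\mathcal{A}$, so the hypothesis earns its keep in guaranteeing that the $\mathcal{A}_+$-valued radii can be compared: both to make $\{\mathcal{N}_p(t,\lambda)\}$ a legitimate basis (closed under finite intersection up to a smaller neighborhood, via a positive common lower bound) and to split and compare $t_0$ cleanly. A secondary delicate point is the selection of $\lambda$, which must be justified purely from the continuity of $T$ together with $T(1,1)=1$, since no explicit modulus of continuity is available.
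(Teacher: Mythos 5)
Your proof is correct and is essentially the argument the paper itself relies on: the paper's ``proof'' is a one-line deferral to Theorem 7.2 of Schweizer--Sklar, whose classical separation argument (split $t_0$ into $\tfrac12 t_0+\tfrac12 t_0$, pick $\lambda$ with $T(1-\lambda,1-\lambda)>\mathcal{F}_{p,q}(t_0)$ using continuity of $T$ and $T(1,1)=1$, then contradict via the Menger triangle inequality) is exactly what you carry out in the $C^*$-valued setting. Your further observation that the total-order hypothesis on $\mathcal{A}$ is needed only to make the $(t,\lambda)$-neighborhoods a legitimate basis --- since positive elements of a general $C^*$-algebra need not be comparable --- supplies a detail the paper leaves entirely implicit.
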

\begin{proof}
	The proof is similar to Theorem (7.2) in \cite{Sh.Skalar}. 
\end{proof}

\begin{definition}\label{def-conv}
	Let $\{p_n \}$ be a sequence in a $PM^*$-space $X$. Then;
	\begin{enumerate}
		\item $\{p_n \}$ is said to be a Cauchy sequence, if, for every $t\succ \theta$ and  $\lambda > 0$, there exists an integer $M_{t, \lambda}$ , such that $\mathcal{F}_{p_n,p_m}(t) > 1-\lambda$, for every $n,m> M_{t, \lambda}$.\\
		\item $\{p_n \}$ is said to converge to a point $p$ in $X$ (and we write $p_n \to p$) if, for every $t\succ \theta$ and every $\lambda > 0$, there exists an integer $M_{t, \lambda}$ , such that $p_n \in \mathcal{N}_p (t, \lambda)$, i.e., $\mathcal{F}_{p,p_n}(t) > 1-\lambda$, whenever $n > M_{t, \lambda}$ .
	\end{enumerate} 
\end{definition}
\begin{definition}
	A $PM^*$-space is said to be {\em complete}, if every Cauchy sequence is convergent.
\end{definition}

\begin{theorem}
	If a sequence in a $PM^*$-space be convergent, then the convergence is unique.
\end{theorem}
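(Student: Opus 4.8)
The plan is to establish uniqueness directly through condition (1) of Definition \ref{PM-def}: assuming a sequence $\{p_n\}$ converges both to $p$ and to $p'$, I would show that $\mathcal{F}_{p,p'}(t) = 1$ for every $t \succ \theta$, which forces $p = p'$. The engine of the argument is the triangle condition (iii) combined with the continuity and monotonicity of the $t$-norm.

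First I would fix an arbitrary $t \succ \theta$ and exploit the additive structure of $\mathcal{A}_+$ by writing $t = \tfrac{1}{2}t + \tfrac{1}{2}t$, observing that $\tfrac{1}{2}t \succ \theta$ since scaling a nonzero positive element by a positive scalar keeps it positive and nonzero. Inserting the intermediate point $p_n$ into condition (iii) gives
$$\mathcal{F}_{p,p'}(t) = \mathcal{F}_{p,p'}\left(\tfrac{1}{2}t + \tfrac{1}{2}t\right) \geq T\left(\mathcal{F}_{p,p_n}\left(\tfrac{1}{2}t\right), \mathcal{F}_{p_n,p'}\left(\tfrac{1}{2}t\right)\right).$$

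Next I would bring in the two convergence hypotheses. Given $\lambda > 0$, Definition \ref{def-conv} applied at the value $\tfrac{1}{2}t \succ \theta$ produces an index beyond which $\mathcal{F}_{p,p_n}(\tfrac{1}{2}t) > 1-\lambda$ and, using the symmetry condition (ii), $\mathcal{F}_{p_n,p'}(\tfrac{1}{2}t) > 1-\lambda$ hold simultaneously. The monotonicity of the $t$-norm, condition (4) of Definition \ref{DT}, then yields $\mathcal{F}_{p,p'}(t) \geq T(1-\lambda, 1-\lambda)$ for all sufficiently large $n$; since the left-hand side is independent of $n$, the inequality $\mathcal{F}_{p,p'}(t) \geq T(1-\lambda, 1-\lambda)$ holds outright.

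Finally I would let $\lambda \to 0^+$. By continuity of $T$ together with the boundary value $T(1,1) = 1$, which follows from $T(a,1)=a$, the right-hand side tends to $1$, so $\mathcal{F}_{p,p'}(t) \geq 1$; as every d.d.f.\ is bounded above by $1$, this forces $\mathcal{F}_{p,p'}(t) = 1$. Because $t \succ \theta$ was arbitrary, condition (1) delivers $p = p'$. The only delicate points, both routine once the halving trick and symmetry are in hand, are checking that $\tfrac{1}{2}t$ is again strictly positive so that the convergence definition applies at that argument, and that the two $(1-\lambda)$-thresholds can be met by a single common index.
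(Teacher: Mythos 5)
Your proposal is correct and follows essentially the same route as the paper's own proof: split $t$ as $\tfrac{1}{2}t+\tfrac{1}{2}t$, apply the triangle condition (iii) through the intermediate point $p_n$, and use the convergence of $\{p_n\}$ to both limits together with the continuity of $T$ to force $\mathcal{F}_{p,p'}(t)=1$ for all $t\succ\theta$. Your version merely makes explicit the $\lambda$-threshold bookkeeping, the use of symmetry, and the limit $T(1-\lambda,1-\lambda)\to T(1,1)=1$ that the paper leaves implicit.
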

\begin{proof}
	Let $\{p_n\}$ be a sequence in a $PM^*$-space $X$, such that converges to two point $p, q \in X$. we have;
	$$\mathcal{F}_{p,q}(t)\geq T\left(\mathcal{F}_{p,p_n}(t/2
	),\mathcal{F}_{q,p_n}(t/2) \right),$$
	for every $t\in \mathcal{A}$. Since
	$$\lim_{n\to \infty}\mathcal{F}_{p,p_n}(t/2
	)=\lim_{n\to \infty}\mathcal{F}_{q,p_n}(t/2
	)=1,$$ so $\mathcal{F}_{p,q}(t)=1$ and therefore $p=q$.
\end{proof}

\begin{lemma}
	$p_n\to p$ if and only if $\mathcal{F}_{pp_n} \to \mathcal{F}_{p,p} = \mathcal{H}_0$.
\end{lemma}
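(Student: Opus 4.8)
The plan is to reduce the statement to an unwinding of the two definitions involved, since the equivalence is essentially definitional once one fixes the meaning of convergence in $\Delta^+_{\mathcal{A}}$. First I would pin down what $\mathcal{F}_{p,p_n}\to\mathcal{H}_0$ is intended to mean: following the classical theory of probabilistic metric spaces, this is weak convergence of distance distribution functions, i.e.\ convergence at every point of continuity of the limit $\mathcal{H}_0$. Since $\mathcal{H}_0(\theta)=0$ while $\mathcal{H}_0(C)=1$ for every $C\succ\theta$, the only point of discontinuity of $\mathcal{H}_0$ is $\theta$, so the convergence condition reads precisely: $\mathcal{F}_{p,p_n}(t)\to 1$ for every $t\succ\theta$. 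I would also record at the outset that $\mathcal{F}_{p,p}=\mathcal{H}_0$, which follows from condition (i) of Definition \ref{PM-def} (giving $\mathcal{F}_{p,p}(t)=1$ for all $t\succ\theta$) together with the defining property $F(\theta)=0$ of a d.d.f.\ in Definition \ref{ddf}.

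For the forward implication, assume $p_n\to p$. Fix $t\succ\theta$ and $\lambda>0$. By Definition \ref{def-conv}(2) there is an integer $M_{t,\lambda}$ with $\mathcal{F}_{p,p_n}(t)>1-\lambda$ for all $n>M_{t,\lambda}$. Since every d.d.f.\ is bounded above by $1$, this sandwiches $1-\lambda<\mathcal{F}_{p,p_n}(t)\le 1$ for large $n$; as $\lambda>0$ was arbitrary, $\mathcal{F}_{p,p_n}(t)\to 1=\mathcal{H}_0(t)$. Because $t\succ\theta$ was arbitrary, this yields $\mathcal{F}_{p,p_n}\to\mathcal{H}_0$. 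Conversely, assuming $\mathcal{F}_{p,p_n}(t)\to 1$ for every $t\succ\theta$, I would fix $t\succ\theta$ and $\lambda>0$; convergence of the scalar sequence $\mathcal{F}_{p,p_n}(t)$ to $1$ supplies an index $M_{t,\lambda}$ such that $\mathcal{F}_{p,p_n}(t)>1-\lambda$ whenever $n>M_{t,\lambda}$, which is exactly the condition in Definition \ref{def-conv}(2) for $p_n\to p$.

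The analytic content here is minimal; the only real care needed---and the step I expect to be the main subtlety rather than a genuine obstacle---is the first one: correctly identifying convergence in $\Delta^+_{\mathcal{A}}$ with pointwise convergence at the continuity points of the limit. Once the continuity points of $\mathcal{H}_0$ are identified as exactly the set $\{t\in\mathcal{A}_+ : t\succ\theta\}$, both implications collapse to a direct translation between the $(t,\lambda)$-formulation of convergence and the $\varepsilon$-$N$ statement that $\mathcal{F}_{p,p_n}(t)\to 1$. One small point worth flagging, given that $\mathcal{A}_+$ is only partially ordered in general, is that no total order is required for this argument: the neighborhoods are indexed by one fixed $t\succ\theta$ at a time, so the reasoning never needs to compare incomparable elements of $\mathcal{A}_+$.
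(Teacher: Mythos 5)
Your proposal is correct and follows exactly the route the paper intends: the paper's own proof is just the one-line remark that the statement is straightforward from Definition \ref{def-conv}, and your argument is precisely that definitional unwinding, made explicit by identifying convergence in $\Delta^+_{\mathcal{A}}$ with pointwise convergence $\mathcal{F}_{p,p_n}(t)\to 1$ at every $t\succ\theta$ (the continuity points of $\mathcal{H}_0$) and translating this into the $(t,\lambda)$-neighborhood formulation. In fact your write-up supplies the details the paper omits, including the justification that $\mathcal{F}_{p,p}=\mathcal{H}_0$.
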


\begin{proof}
	By definition \ref{def-conv}, the proof is straightforward.  
\end{proof}

\begin{definition}
	Let  $(X,\mathcal{A},\mathcal{F},T)$ be a PM-space. A mapping $f:X\to X$ is said to be $a$-contraction, if there exists a constant $a\in \mathcal{A}$ with $a\succ e$ such that
	$$\mathcal{F}_{fp,fq}(t)\geq \mathcal{F}_{p,q}(a^*ta)$$
	for all $p, q \in X$ and $t\in \mathcal{A}$.
\end{definition}

\begin{definition}
	\cite{Hadzic}  Let $T$ be a $t$-norm and $T^n : [0, 1] \to [0, 1],  (n\in \mathbb{N})$ be defined as follow:
	$$T^1(a)=T(a,a), \quad T^2(a)=T(T^1(a),a),\quad ... ,\quad T^n(a)=T(T^{n-1}(a),a)$$
	for all $n\in \mathbb{N}$ and $a\in [0,1]$. The $t$-norm $T$ is said to be of {\em Had?i?-type}, if $T$ is continuous and the family $\{T^n(a)\}_{n\in \mathbb{N}}$ is equicontinuous at $a=1$. that is, for any $\epsilon \in (0,1)$, there exists $\delta\in (0,1)$ such that
	$$a>1-\delta \quad \Rightarrow \quad T^n(a)> 1-\epsilon$$
	for all $n\geq 1.$ 
\end{definition}
A trivial example of t-norm of Had?i?-type is $T = T_{\min}(a, b) = min\{a, b\}.$ 

In the following theorem, we examine the fixed points of functions on $X$.
\begin{theorem}\label{FB1}
	If  $(X,\mathcal{A},\mathcal{F},T)$ be a complete Menger probabilistic metric space with $T$ Had?i?-type and $f: X \to X$ be a $a$-contraction function, then, for any $p_0\in X$, the sequence $\{f^np_{0}\}$
	converges to a unique fixed point of $f$. 
\end{theorem}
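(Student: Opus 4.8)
The plan is to run the classical Picard iteration of the Banach/Sehgal--Bharucha-Reid scheme, adapted to the $PM^*$-setting. Fix $p_0\in X$ and set $p_n=f^np_0$. Applying the $a$-contraction repeatedly gives, for every $t\in\mathcal{A}$ and $n\ge 1$, the decay estimate
$$\mathcal{F}_{p_n,p_{n+1}}(t)\geq \mathcal{F}_{p_0,p_1}\big((a^*)^n\,t\,a^n\big),$$
by an immediate induction on $n$: the base case is the definition of $a$-contraction, and the inductive step conjugates the argument once more by $a$. I would then argue that for each fixed $t\succ\theta$ the right-hand side tends to $1$. Since $a\succ e$, the sequence $b_n:=(a^*)^n\,t\,a^n$ is (strictly) increasing and unbounded in $\mathcal{A}_+$; granting this, Definition \ref{ddf} yields $\mathcal{F}_{p_0,p_1}(b_n)\to 1$, hence $\mathcal{F}_{p_n,p_{n+1}}(t)\to 1$ as $n\to\infty$.

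Next comes the Cauchy step. Fix $t\succ\theta$ and $\lambda\in(0,1)$. I would decompose $t=\sum_{i\ge 1}t_i$ into a summable family in $\mathcal{A}_+$ (a geometric-type splitting adapted to conjugation by $a$) and apply property (iii) of Definition \ref{PM-def} repeatedly to obtain, for $m>n$,
$$\mathcal{F}_{p_n,p_m}(t)\geq T^{\,m-n-1}\big(\mathcal{F}_{p_n,p_{n+1}}(t_1),\ldots,\mathcal{F}_{p_{m-1},p_m}(t_{m-n})\big).$$
By the decay estimate, the splitting is chosen so that every factor $\mathcal{F}_{p_{n+i-1},p_{n+i}}(t_i)$ simultaneously exceeds $1-\delta$ once $n$ is large. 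The Had\v{z}i\'c-type hypothesis then provides, for the given $\lambda$, a threshold $\delta\in(0,1)$ with $T^k(\cdot)>1-\lambda$ whenever all entries exceed $1-\delta$, uniformly in $k$; hence $\mathcal{F}_{p_n,p_m}(t)>1-\lambda$ for all sufficiently large $n$ and all $m>n$. Thus $\{p_n\}$ is Cauchy, and completeness gives $p_n\to p^*$ for some $p^*\in X$.

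It remains to identify $p^*$ as the unique fixed point. The $a$-contraction makes $f$ continuous, since $q_k\to q$ forces $\mathcal{F}_{fq_k,fq}(t)\geq \mathcal{F}_{q_k,q}(a^*ta)\to 1$; therefore $p_{n+1}=fp_n\to fp^*$, while also $p_{n+1}\to p^*$, so the uniqueness-of-limits theorem forces $fp^*=p^*$. For uniqueness, if $fp^*=p^*$ and $fq^*=q^*$, then iterating the contraction gives $\mathcal{F}_{p^*,q^*}(t)\geq \mathcal{F}_{p^*,q^*}\big((a^*)^n\,t\,a^n\big)\to 1$ for every $t\succ\theta$, whence $p^*=q^*$ by axiom (i) of Definition \ref{PM-def}.

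The main obstacle I anticipate lies in the noncommutative bookkeeping. Unlike the scalar case, the map $t\mapsto a^*ta$ is not a simple rescaling, so two points need genuine care: first, justifying that $(a^*)^n\,t\,a^n$ is actually increasing and unbounded, which uses $a\succ e$ in an essential way and may require $\mathcal{A}$ to be commutative or $a$ to be central (the pointwise relation $a^*ta\succeq t$ can fail for noncentral $a$); and second, selecting the decomposition $t=\sum t_i$ compatibly with this conjugation so that all the arguments feeding into $T^{\,m-n-1}$ are controlled uniformly in the number of iterates. Reconciling this splitting with the equicontinuity furnished by the Had\v{z}i\'c-type condition is the delicate heart of the proof.
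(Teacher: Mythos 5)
Your overall architecture (Picard iteration, decay estimate, Cauchy step via Had\v{z}i\'{c}-type equicontinuity, identification of the limit, uniqueness by iterating the contraction) matches the paper's, and the first and last parts are fine. The genuine gap is your Cauchy step: you reduce everything to choosing a summable decomposition $t=\sum_{i\ge1}t_i$ for which every factor $\mathcal{F}_{p_{n+i-1},p_{n+i}}(t_i)\ge\mathcal{F}_{p_0,p_1}\big((a^*)^{n+i-1}t_i\,a^{n+i-1}\big)$ exceeds $1-\delta$ uniformly in $i$ once $n$ is large, and you concede that you cannot construct this splitting. That concession is fatal as written, because the scalar recipe $t_i=(1-c)c^{i-1}t$ (with $\|a^{-1}\|^2<c<1$) works only by comparing $c^{i-1}(a^*)^{n+i-1}t\,a^{n+i-1}$ against $(a^*)^{n}t\,a^{n}$ pointwise, i.e.\ it needs exactly the domination $a^*sa\succeq\mu s$, $\mu>1$, whose failure for noncentral $a$ you yourself point out. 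So the proposal is a program whose central construction is missing. (A smaller mismatch: the paper's Had\v{z}i\'{c}-type definition concerns the one-variable iterates $T^n(\alpha)$, so your multi-argument $T^{m-n-1}(x_1,\dots,x_{m-n})$ needs the additional, easy remark that by monotonicity it dominates $T^{m-n-1}(\min_i x_i)$ before equicontinuity can be invoked.)

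The paper's proof takes a different route here that makes the infinite splitting unnecessary: it uses the two-term split $t=(t-a^{-1}ta^{-1})+a^{-1}ta^{-1}$, applies axiom (iii) once, and then uses the contraction itself (with $a=a^*$, which is automatic since $a\succ e$ forces $a\in\mathcal{A}_h$) to absorb the second factor back into a shorter-span distance:
$$\mathcal{F}_{p_{n+1},p_{n+k}}(a^{-1}ta^{-1})=\mathcal{F}_{fp_n,fp_{n+k-1}}(a^{-1}ta^{-1})\ \ge\ \mathcal{F}_{p_n,p_{n+k-1}}\big(a(a^{-1}ta^{-1})a\big)=\mathcal{F}_{p_n,p_{n+k-1}}(t),$$
which gives the recursion $\mathcal{F}_{p_n,p_{n+k}}(t)\ge T\big(\mathcal{F}_{p_n,p_{n+1}}(t-a^{-1}ta^{-1}),\,\mathcal{F}_{p_n,p_{n+k-1}}(t)\big)$ and, by induction on $k$, the bound $\mathcal{F}_{p_n,p_{n+k}}(t)\ge T^k\big(\mathcal{F}_{p_n,p_{n+1}}(t-a^{-1}ta^{-1})\big)$ --- a single argument, exactly what the one-variable equicontinuity controls. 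Adopting this recursion would eliminate what you call the ``delicate heart.'' Be aware, though, that your noncommutativity worry is well founded and afflicts the paper's proof too: the recursion needs $a^{-1}ta^{-1}\prec t$ (so that the first argument is positive), and the decay step needs $(a^*)^nta^n$ to be increasing; both can fail for noncentral $a$ (for instance $a=\mathrm{diag}(2,10)$ and $t$ the all-ones $2\times2$ matrix already violate $a^{-1}ta^{-1}\preceq t$), and the paper's justification via $|xy|^2\preceq|x|^2|y|^2$ is not a valid operator inequality. A fully rigorous proof by either route needs an extra hypothesis of the kind you suggest (commutativity of $\mathcal{A}$, centrality of $a$, or restricting to $t$ commuting with $a$).
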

\begin{proof}
	Let $t \in \mathcal{A}$, since $a\succ e$ so $a^{-1}\prec e$, on the other hands, 
	$$a^{-1}ta^{-1}=(t^{\frac{1}{2}}a^{-1})^*(t^{\frac{1}{2}}a^{-1})=| t^{\frac{1}{2}}a^{-1}|^2\preceq |t^{\frac{1}{2}}|^2|a^{-1}|^2\preceq ||a^{-1}||^2(t^{\frac{1}{2}}t^{\frac{1}{2}})=||a^{-1}||^2t.$$
	Since $||a^{-1}||<1$, hence $a^{-1}ta^{-1}\prec t.$\\
	Let $p_0\in X$ and set $p_{n+1}=fp_n=f^{n+1}p_0$, for $n= 1,2, ...$ . Then we have;
	\begin{align*}
		\mathcal{F}_{p_{n+1},p_{n}}(t)&=\mathcal{F}_{fp_{n},fp_{n-1}}(t)\\
		&\geq\mathcal{F}_{p_{n},p_{n-1}}(a^*ta)\\
		&\geq\mathcal{F}_{p_{n-1},p_{n-2}}\left( (a^*)^2t(a)^2\right)\\
		&\geq \cdots\\
		&\geq\mathcal{F}_{p_{1},p_{0}}\left( (a^*)^nt(a)^n\right)\\
	\end{align*}
	Since $\{ (a^*)^nt(a)^n\}$ is a strictly increasing sequence in $\mathcal{A}$, from Definition \ref{ddf}, we have, $\mathcal{F}_{p_{1},p_{0}}\left( (a^*)^nt(a)^n\right) \to 1$, as $n\to \infty$, so, 
	\begin{equation*}
		\lim_{n\to \infty} \mathcal{F}_{p_{n+1},p_{n}}(t)=1.
	\end{equation*}
	
	By the monotonicity of $T$, it follows that;
	\begin{align*}
		&\mathcal{F}_{p_{n},p_{n+k}}(t)=T\left( \mathcal{F}_{p_{n},p_{n+1}}(t-a^{-1}ta^{-1}), \mathcal{F}_{p_{n+1},p_{n+k}}(a^{-1}ta^{-1})\right) \\
		&=T\left( \mathcal{F}_{p_{n},p_{n+1}}(t-a^{-1}ta^{-1}), \mathcal{F}_{fp_{n},fp_{n+k-1}}(a^{-1}ta^{-1})\right) \\
		& \succeq T\left( \mathcal{F}_{p_{n},p_{n+1}}(t-a^{-1}ta^{-1}), \mathcal{F}_{p_{n},p_{n+k-1}}(t)\right) \\
		&\cdots\\
		& \succeq T^k\left( \mathcal{F}_{p_{n},p_{n+1}}(t-a^{-1}ta^{-1})\right).
	\end{align*}
	Hence;
	$$\mathcal{F}_{p_{n},p_{n+k}}(t)\succeq T^k\left( \mathcal{F}_{p_{n},p_{n+1}}(t-a^{-1}ta^{-1})\right).$$
	Since $\{T^k: k\geq0\}$ is equicontinuous at 1 and $T^k(1)=1$, so, for any $\epsilon>0$, there exists $\delta \in (0,1)$ such that, for any $\alpha> 1-\delta$,
	$$T^k(\alpha)>1-\epsilon,$$
	for all $k\geq 1$. Since $\lim_{n\to\infty}\mathcal{F}_{p_{n},p_{n+1}}(t-a^{-1}ta^{-1})=1$, hence there exists $n_0\in \mathbb{N}$, such that for all $n\geq n_0$, 
	$\mathcal{F}_{p_{n},p_{n+1}}(t-a^{-1}ta^{-1})>1-\delta$, hence, $\mathcal{F}_{p_{n},p_{n+k}}(t)>1-\epsilon$, and therefore,
	$$\lim_{n,m\to \infty}\mathcal{F}_{p_{n},p_{m}}(t)=1.$$
	So, we conclude that, $\{p_n\}$ is Cauchy sequence. By the completeness of $PM^*$-space $X$, there exists $p\in X$ such that, 
	$$\lim_{n\to \infty}\mathcal{F}_{p_{n},p}(t)=1.$$
	On the other hands,
	$$\mathcal{F}_{f{p_n},fp }(t)\geq \mathcal{F}_{p_n,p}(ata).$$
	Since $p_n\to p$ and $f{p_n}\to p$, hence,
	$$\mathcal{F}_{fp,p}(t)=1,$$
	for any $t>0$. So $fp=p$.\\
	To show the uniqueness of $p$, let $q$ be the another fixed point of $f$, we have, 
	$$\mathcal{F}_{p,q}(t)=\mathcal{F}_{fp,fq}(t)\geq\mathcal{F}_{p,q}(ata)= \mathcal{F}_{fp,fq}(ata)\geq \cdots \geq \mathcal{F}_{p,q}(a^nta^n)$$
	Since $\lim_{n\to \infty}\mathcal{F}_{p,q}(a^nta^n)=1$, so, $\mathcal{F}_{p,q}(t)=1$. Hence, $p=q$ and therefore $f$ has a unique fixed point in $X$.
\end{proof}
\section{Examples and Applications}
This section presents several examples and applications to illustrate the theoretical results.
\begin{example}
	Suppose $(X, \mathcal{A}, \mathcal{F}, T_{\min})$ be as in Example \ref{Ex2}, and let $0<\lambda_1, \lambda_2<1$ and $\lambda=\max\{\lambda_1, \lambda_2\}$. Define a mapping $f:X \to X$ by $f(x,y)=(\alpha_1+\lambda_1x, \alpha_2+\lambda_2y)$ for $\alpha_1, \alpha_2\in \mathbb{R}$ and $(x,y)\in X$. Let $p=(x_1,y_1)$ and $q=(x_2,y_2)$ be in $X$ such that $p\neq q$, we have;
	\begin{align*}
		d(fp,fq)&=\left(
		\begin{array}{cc}
			\lambda_1|x_1-x_2| & 0  \\
			0 & \lambda_2|y_1-y_2|  \\
		\end{array}
		\right)\\
		&\preceq \lambda \left(
		\begin{array}{cc}
			|x_1-x_2| & 0  \\
			0 & |y_1-y_2|  \\
		\end{array}
		\right)\\
		&=\lambda d(p,q).     		
	\end{align*} 
	So, 
	\begin{align*}
		\mathcal{F}_{fp,fq}(C)&=1-e^{-\frac{tr(C)}{tr\left( d(fp,fq)\right)}}\\
		&\geq 1-e^{-\frac{tr(C)}{\lambda tr\left( d(p,q)\right)}}\\
		&= 1-e^{-\frac{tr\left( \varLambda C\varLambda\right) }{tr\left( d(p,q)\right)}}\\
		&=\mathcal{F}_{p,q}\left( \varLambda C\varLambda\right),
	\end{align*}
	for every $C\succ \theta$, where, $\varLambda\in \mathcal{A}$ as bellow;
	$$\varLambda=\left(
	\begin{array}{cc}
		\frac{1}{\sqrt{\lambda}} & 0  \\
		0 & \frac{1}{\sqrt{\lambda}}  \\
	\end{array}
	\right).$$
	Therefore, $f$ is a $\varLambda$-contraction and  hence, by Theorem \ref{FB1}, $f$ has a fixed point in $X$. Indeed, fixed point is $p=\left(\frac{\alpha_1}{1-\lambda_1}, \frac{\alpha_2}{1-\lambda_2} \right)$.
\end{example}
\begin{example}
	Let $d, f,$ and $\varLambda$ be as in the previous example, the result also is true for PM*-space discussed in Example \ref{Ex3}.  
\end{example}
\begin{example}
	Consider the system consisting of two Fredholm integral equations of the second
	kind in two unknowns:
	$$\phi_1(x)=g_1(x)+\int_{a}^{b}K_{11}(x,t)\phi_1(t)+K_{12}(x,t)\phi_2(t)dt$$
	$$\phi_2(x)=g_2(x)+\int_{a}^{b}K_{21}(x,t)\phi_1(t)+K_{22}(x,t)\phi_2(t)dt$$
	where $K_{ij}(x,t)$ are given kernel functions, and $f_i(x)$ are given functions for $x\in [a,b]$. The above system can be express as follow;
	\begin{align*}
		\left(
		\begin{array}{c}
			\phi_1(x)   \\
			\phi_2(x)  \\
		\end{array}
		\right)=
		\left(
		\begin{array}{c}
			g_1(x)   \\
			g_2(x)  \\
		\end{array}
		\right)+\int_{a}^{b}
		\left(
		\begin{array}{cc}
			K_{11}(x,t) & K_{12}(x,t)  \\
			K_{21}(x,t) & K_{22}(x,t)  \\
		\end{array}
		\right)
		\left(
		\begin{array}{c}
			\phi_1(x)   \\
			\phi_2(x)  \\
		\end{array}
		\right)dt
	\end{align*}
	Let $\|K\|=\max\left\{\|K_{11}\|+\|K_{21}\|, \|K_{12}\|+\|K_{22}\|\right\}$
	, $X=C([a,b],\mathbb{R})\times C([a,b],\mathbb{R})$ and $\mathcal{A}=\mathcal{M}_{2}(\mathbb{R})$, define function $f: X \to X$ as follow;
	$$f(u,v)=\left(f_1(u,v), f_2(u,v)\right)$$
	for all $(u,v)\in X$, where;
	$$f_1(u,v)(x)=g_1(x)+\int_{a}^{b}K_{11}(x,t)u(t)+K_{12}(x,t)v(t)dt,$$  $$f_2(u,v)(x)=g_2(x)+\int_{a}^{b}K_{21}(x,t)u(t)+K_{22}(x,t)v(t)dt.$$
	We have;
	\begin{align*}
		\|f_1(u_1,v_1)-f_1(u_2,v_2)\|&\leq \int_{a}^{b}\|K_{11}\|\|u_1-u_2\|+\|K_{12}\|\|v_1-v_2\|dt\\
		&\leq (b-a)\|K\|\left(\|u_1-u_2\|+\|v_1-v_2\|\right)
	\end{align*}
	Similarly,
	\begin{align*}
		\|f_2(u_1,v_1)-f_2(u_2,v_2)\|\leq (b-a)\|K\|\left(\|u_1-u_2\|+\|v_1-v_2\|\right)
	\end{align*}
	
	Consider the metric $d: X\times X \to \mathcal{A}_+$ as follow;
	\begin{align*}
		d((u_1, v_1),(u_2, v_2))&=\left(
		\begin{array}{cc}
			\|u_1-u_2\|+\|v_1-v_2\| & 0  \\
			0 & \|u_1-u_2\|+\|v_1-v_2\|  \\
		\end{array}
		\right).
	\end{align*}
	We have,
	\begin{align*}
		&d\left(f(\phi_1, \phi_2),f(\psi_1,\psi_2) \right)\\
		&=\left(
		\begin{array}{cc}
			\|f_1(\phi_1,\phi_1)-f_1(\psi_2,\psi_2)\|+\|f_2(\phi_1,\phi_1)-f_2(\psi_2,\psi_2)\| \qquad 0  \\
			0 \qquad \|f_1(\phi_1,\phi_1)-f_1(\psi_2,\psi_2)\|+\|f_2(\phi_1,\phi_1)-f_2(\psi_2,\psi_2)\|\\
		\end{array}
		\right)	\\
		&\leq 2(b-a)\|K\|\left(
		\begin{array}{cc}
			\|\phi_1-\phi_2\|+\|\psi_1-\psi_2\| & 0  \\
			0 & \|\phi_1-\phi_2\|+\|\psi_1-\psi_2\|  \\
		\end{array}
		\right)\\ 
		&\leq  2(b-a)\|K\|d\left((\phi_1, \phi_2),(\psi_1,\psi_2) \right) 
	\end{align*}
	Hence,
	$$d\left(f(\phi_1, \phi_2),f(\psi_1,\psi_2) \right)\leq  2(b-a)\|K\|d\left((\phi_1, \phi_2),(\psi_1,\psi_2) \right)$$
	If we suppose $\|K\|\leq \frac{1}{2(b-a)}$, then we have;
	\begin{align*}
		\mathcal{F}_{f(\phi_1, \phi_2),f(\psi_1,\psi_2)}(C)&=1-e^{\frac{tr(C)}{tr \left(d(f(\phi_1, \phi_2),f(\psi_1,\psi_2))\right)}}\\
		&\geq 1-e^{\frac{tr(C)}{2(b-a)\|K\| tr\left(d((\phi_1, \phi_2),(\psi_1,\psi_2))\right)}}\\
		&=1-e^{\frac{\frac{1}{2(b-a)\|K\|}tr(C)}{tr\left(d((\phi_1, \phi_2),(\psi_1,\psi_2))\right)}}\\
		&=1-e^{\frac{tr(DCD)}{tr\left(d((\phi_1, \phi_2),(\psi_1,\psi_2))\right)}}\\
		&=\mathcal{F}_{(\phi_1, \phi_2),(\psi_1,\psi_2)}(DCD).
	\end{align*}
	for every $C\succ \theta$, where;
	\begin{align*}
		D=\left(
		\begin{array}{cc}
			\frac{1}{\sqrt{2(b-a)\|K\|}} & 0  \\
			0 & \frac{1}{\sqrt{2(b-a)\|K\|}}\\
		\end{array}
		\right)	\\
	\end{align*}
	Since $D\succ I$, therefore $f$ is a $D$-contraction and by Theorem \ref{FB1}, $f$ has a unique fixed point.
\end{example}

\section{Conclusion}

In this study, we have extended the classical concept of distribution functions by redefining them within the framework of arbitrary C*-algebras, with particular emphasis on distance distribution functions. This generalization allows for a more flexible and abstract formulation, enabling their use in a broader range of mathematical contexts, including operator theory and functional analysis.

By employing positive elements in C*-algebras, we established a novel form of probabilistic metric spaces that naturally align with the generalized distribution functions. Within this setting, we examined key topological properties and proved fixed point theorems that generalize existing results in probabilistic metric spaces. These developments not only deepen the theoretical understanding of distribution functions in noncommutative settings but also pave the way for new applications.

Illustrative examples and applications to integral equations were also presented, demonstrating the utility and potential of this generalized framework in addressing real-world mathematical problems. Future research may explore further extensions of this approach, including its interactions with quantum probability and noncommutative geometry.

\end{document}